\newtheorem{theorem}{Theorem}
\begin{document}

\title{An upper bound on stick numbers of knots}

\author[Y. Huh]{Youngsik Huh}
\address{Department of Mathematics, School of Natural Sciences,
Hanyang University, Seoul 133-791, Korea}
\email{yshuh@hanyang.ac.kr}
\author[S. Oh]{Seungsang Oh}
\address{Department of Mathematics, Korea University,
Anam-dong, Sungbuk-ku, Seoul 136-701, Korea}
\email{seungsang@korea.ac.kr}
\thanks{This work was supported by the Korea Research Foundation Grant
funded by the Korean Government (MOEHRD, Basic Research Promotion
Fund) (KRF-2006-312-C00469).}

\keywords{knot, stick number, upper bound} \subjclass{57M25 57M27}

\begin{abstract}
In 1991, Negami found an upper bound on the stick number $s(K)$ of
a nontrivial knot $K$ in terms of the minimal crossing number
$c(K)$ of the knot which is $s(K) \leq 2 c(K)$. In this paper we
improve this upper bound to $s(K) \leq \frac{3}{2} (c(K)+1)$.
Moreover if $K$ is a non-alternating prime knot, then $s(K) \leq
\frac{3}{2} c(K)$.
\end{abstract}

\maketitle

\section{Introduction} \label{sec:intro}
A simple closed curve embedded into the Euclidean 3-space is
called a {\em knot}. Two knots $K$ and $K^{\prime}$ are said to be
{\em equivalent}, if there exists an orientation preserving
homeomorphism of $\mathbb{R}^3$ which maps $K$ to $K^{\prime}$, or
to say roughly, we can obtain $K^{\prime}$ from $K$ by a sequence
of moves without intersecting any strand of the knot. And the
equivalence class of $K$ is called the {\em knot type of $K$}. A
knot equivalent to another knot in a plane of the 3-space is said
to be {\em trivial}. A {\em stick knot} is a knot which consists
of finite line segments, called {\em sticks}.

One natural question concerning stick knots may be the {\em stick
number} $s(K)$ of a knot $K$ which is defined to be the minimal
number of sticks for construction of the knot type into a stick
knot. Since this representation of knots has been considered to be
a useful mathematical model of cyclic molecules or molecular
chains, the stick number may be an interesting quantity not only
in knot theory of mathematics, but also in chemistry and physics.
Although it seems to be not easy to determine $s(K)$ completely
for arbitrary knot $K$, which is usual for any other minimality
invariats of knots, there are some literatures in which the range
of $s(K)$ was theoretically investigated \cite{ABGW, FLS, J, N,
R}. Especially, in 1991, Negami found upper and lower bounds on
the stick number of any nontrivial knot $K$ in terms of the
crossing number $c(K)$ \cite{N}:
\[ \frac{5+ \sqrt{25+8(c(K)-2)}}{2} \leq s(K) \leq 2c(K) \]
\noindent Here the crossing number $c(K)$ is  the minimal number
of double points in any generic projection of the knot type into
the plane $\mathbb{R}^2 \subset \mathbb{R}^3$. In \cite{A, FLS} it
was questioned whether it is possible to improve the Negami's
inequalities: To describe specifically,
\begin{enumerate}
\item[Q1.] Is there any knot satisfying $2s(K)=
5+\sqrt{25+8(c(K)-2)}$ ?
\item[Q2.] Is there any knot satisfying $s(K)= 2c(K)$ other than the trefoil knots?
\end{enumerate}
The Negami's lower bound was slightly improved by Calvo \cite{CA}.
And recently Elifai showed that the answer for the first question
is negative for the knots with $c(K)\leq 26$ \cite{E}.

In this paper we give an improved upper bound on $s(K)$ and answer
for the second question. The following theorem is the main result
of this paper.
\begin{theorem} \label{thm:main}
Let $K$ be any nontrivial knot. Then $s(K) \leq \frac{3}{2}
(c(K)+1)$. Moreover if $K$ is a non-alternating prime knot, then
$s(K) \leq \frac{3}{2} c(K)$.
\end{theorem}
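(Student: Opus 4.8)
The plan is to start from a reduced diagram $D$ of $K$ realizing the minimal crossing number $c=c(K)$, and to regard $D$ as a connected $4$-valent plane graph $G$ with $c$ vertices and $2c$ edges. Traversing the knot determines the straight-ahead Eulerian circuit of $G$, so that $K$ runs \emph{straight through} each crossing along two transversal strands, and reading off the cyclic sequence of edges yields a cyclic word $e_0,e_1,\dots,e_{2c-1}$ of length $2c$. The naive polygonal realization places one turning vertex at every strand-pass through a crossing, i.e.\ $2c$ vertices and hence $2c$ sticks, which recovers Negami's bound $s(K)\le 2c(K)$. The bookkeeping I would use throughout is that a maximal straight stick passing straight through $k$ crossings absorbs $k+1$ consecutive edges, so the number of sticks equals $2c$ minus the total number of \emph{straightened passes}. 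It therefore suffices to exhibit a geometrically realizable family of at least $\tfrac12(c-3)$ straightenings in general, and of at least $\tfrac12 c$ straightenings when $K$ is a non-alternating prime knot.

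The construction I would carry out turns this counting into a local merge move on $D$: two consecutive edges $e_i,e_{i+1}$ meeting at a crossing $x$ may be amalgamated into one stick exactly when the strand through $x$ is drawn as a single straight segment, and every adjacent pair is an \emph{a priori} candidate since the knot already runs straight through $x$. Combinatorially a set of pairwise non-adjacent merges is a matching in the cycle $C_{2c}$, which alone would permit up to $c$ merges; the genuine restriction is geometric, because fixing a straight stick through $x$ commits its supporting line and its over/under height, and two merges conflict when their sticks would be forced to coincide or to create a spurious double point. I would select the merges greedily along the traversal, guided by the face structure of $G$ (using $V-E+F=2$, so $F=c+2$, to control bigons and short faces where a merge is forced or forbidden), and then realize the chosen sticks by placing the crossings in generic position in a plane and pushing each strand slightly up or down to install the correct crossing data. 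Proving that this selection always retains at least $\tfrac12(c-3)$ mutually compatible merges is the combinatorial heart of the argument, which I expect to reduce to an extremal estimate bounding how many merges one stick and its neighbours can block.

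Two points then remain. For \emph{realizability} I would argue that, once the planar straight-line drawing is fixed, a generic height function assigns the third coordinate so that no two sticks meet except at the prescribed crossings and each carries the required sign; the additive constant $+1$ (equivalently the deficit $3$ in $c-3$) is absorbed by treating the small-crossing knots as base cases with explicit stick models, in particular the trefoil with $c=3$ and $s=6$, where the construction straightens nothing. For the non-alternating prime case I would exploit that a minimal non-alternating diagram must contain a place where the traversal meets two consecutive over-passes (or two consecutive under-passes): along such an arc the strand stays on one side and can be lifted and straightened without dipping between the two crossings, which supplies one extra compatible straightening and sharpens the bound from $\tfrac32(c+1)$ to $\tfrac32 c$. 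The step I expect to be delicate is precisely this simultaneous geometric compatibility: ensuring that the committed supporting lines neither coincide nor introduce extra crossings, and that the lower bound $\tfrac12(c-3)$ on the number of surviving merges is genuinely forced by the planar combinatorics rather than merely typical.
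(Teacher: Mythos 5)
Your proposal is a plan rather than a proof, and the two steps you yourself flag as ``the combinatorial heart'' and ``delicate'' are precisely where the content of the theorem lies; neither is carried out, and neither is routine. First, the claim that one can always select at least $\tfrac12(c-3)$ mutually compatible merges is asserted, not proven: the greedy selection ``guided by the face structure'' is never specified, no extremal estimate is given, and the Euler-formula count $F=c+2$ by itself says nothing about how many straightenings survive the conflicts. Second, the realizability step is a genuine obstruction, not a genericity issue: a merge forces two edges at a $4$-valent vertex of the plane graph to be \emph{collinear}, and imposing many such collinearity constraints simultaneously on a straight-line drawing with a prescribed planar embedding is a global constraint-satisfaction problem that F\'ary-type theorems do not address. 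Perturbing heights afterwards can install the over/under data, but it cannot repair a system of supporting lines that fails to exist in the plane. Third, your accounting in the non-alternating prime case does not close: one extra straightening saves one stick, while the gap between $\tfrac32(c+1)$ and $\tfrac32 c$ is $\tfrac32$, so after integer rounding a single additional merge is insufficient for at least one parity of $c$.

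For comparison, the paper does not work with a minimal crossing diagram at all. It routes the argument through the arc index $a(K)$: the theorem of Bae and Park gives $a(K)\leq c(K)+2$ in general and $a(K)\leq c(K)+1$ for non-alternating prime knots, and this is exactly the combinatorial input that your merge-counting would have to reproduce from scratch. The paper then proves $s(K)\leq \tfrac32(a(K)-1)$ by an explicit construction: an arc presentation with $n$ chords is lifted to a stick knot with $2n$ sticks (horizontal chords at distinct heights joined by vertical sticks on the boundary circle), the horizontal sticks are classified into three types, heights are reassigned so that each type-II and type-III stick bounds a ``reducible triangle'' whose interior misses the rest of the knot, and collapsing these triangles (plus one special move at the top level, and one at the bottom in an even-parity boundary case) removes enough sticks to reach the bound. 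The geometric compatibility that is problematic in your approach is automatic there, because each reduction is a local isotopy across an empty triangle, verified one at a time. If you want to salvage your approach, the missing combinatorial lemma would essentially amount to re-proving the Bae--Park bound, so you should either prove that lemma in full or invoke their result and switch to a construction, as the paper does.
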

Note that $c(K)$ is at least three for any nontrivial knot $K$. If
$\frac{3}{2} (c(K)+1) \geq 2c(K)$, then $c(K)\leq3$. It is known
that the {\em right-handed trefoil knot} in Figure \ref{fig1} and
its mirror image(called {\em left-handed trefoil}) are the only
knot types with $c(K)=3$. And their stick numbers are exactly six.
Therefore, from our theorem, we can conclude that the second
question is negative for any knot type other than the trefoil
knots.

In the rest of this paper we will prove Theorem \ref{thm:main} by
investigating the relation among stick number $s(K)$, crossing
number $c(K)$ and another minimality quantity, {\em arc index}
$a(K)$.
\begin{figure}
\epsfbox{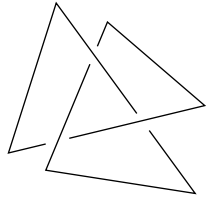} \caption{A projection, a diagram and a stick
representation of the right-handed trefoil} \label{fig1}
\end{figure}
\section{Proof of main theorem}
We introduce some definitions necessary for the proof of Theorem
\ref{thm:main} . A continuous map from the unit circle into
$\mathbb{R}^2$ is called a {\em knot projection}, if each multiple
point of the map is a transversal double point which will be
called a {\em crossing}. By adding the information on the relative
height of two strands at each crossing into the projection, we
obtain a {\em diagram} representing a knot. An example of a
diagram is given in Figure \ref{fig1}. The {\em crossing number
$c(K)$} of a knot $K$ is defined to be the minimal number of
crossings among all diagrams representing the knot type.

In our proof we consider a specific type of knot diagram which is
obtained by drawing $n$ chords $l_1, \ldots, l_n$ on a
2-dimensional disk $B$ according to the following rules:
\begin{enumerate}
\item The end points of each $l_i$ lie on the boundary of $B$.
\item If $l_i$ and $l_j$ share a crossing in the interior of $B$ and $i<j$, then $l_i$
underpasses $l_j$.
\end{enumerate}
\noindent If a diagram of such type represents a knot $K$, it is
called an {\em arc presentation} of $K$. And the {\em arc index
$a(K)$} of a knot $K$ is defined to be the minimal number of
chords among all possible arc presentations of its knot type. In
fact our definition of arc presentation is a little modified from
the original one, but essentially identical \cite{BM,CR}. The left
of Figure \ref{fig2} shows an arc presentation of the trefoil
knot.

Bae and Park established an upper bound of arc index in terms of
the crossing number. Corollary $4$ and Theorem $9$ in \cite{BP}
provide the following;
\begin{theorem}[Bae and Park] \label{thm:BP}
Let $K$ be any nontrivial knot. Then $a(K) \leq c(K)+2$. Moreover
if $K$ is a non-alternating prime knot, then $a(K) \leq c(K)+1$.
\end{theorem}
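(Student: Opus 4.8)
The plan is to prove both inequalities by an explicit construction that converts a minimal diagram of $K$ into an arc presentation, controlling the number of chords by the number of regions of the diagram. First I would reduce to a convenient diagram: choose a connected, reduced minimal diagram $D$ of $K$ realizing $c(K)$ crossings. A minimal diagram of a nontrivial knot has no nugatory crossings and is connected as a $4$-valent graph on the sphere $S^2$, so Euler's formula applies with $V=c(K)$ vertices and $E=2c(K)$ edges, giving exactly $F=c(K)+2$ complementary regions. The number $c(K)+2$ in the statement will arise precisely as this region count, so the goal is to build an arc presentation in which the chords are governed by the regions of $D$.

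For the construction itself, recall from the definition that an arc presentation on $n$ chords is nothing but a decomposition of $K$ into $n$ arcs together with a height order $l_1,\ldots,l_n$ in which the over-strand at every crossing is the higher-indexed chord. I would isotope $D$ into a rectangular (grid) form, so that every arc is a union of horizontal and vertical segments, each crossing has a vertical segment passing over a horizontal one, all horizontal segments sit at distinct heights, and all vertical segments at distinct abscissae; passing to the disk-with-chords model of the paper then recovers an arc presentation whose arc index equals the number of vertical segments (equivalently the grid number). The edges of $D$ can be combed and merged into maximal segments, one associated with each region of $D$, so that the resulting presentation uses at most $F=c(K)+2$ chords. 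Verifying that this merging simultaneously respects the height condition (that is, the induced over/under relation on the chords is acyclic, so a genuine total order $l_1,\ldots,l_n$ exists) and closes up to a single curve isotopic to $K$ gives $a(K)\le c(K)+2$.

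The second statement is the crux, and the genuinely new content. Let $K$ be non-alternating and prime, and fix a prime minimal diagram $D$. Non-alternatingness guarantees a strand that passes over (or under) at two consecutive crossings; I would isolate this local non-alternating configuration and prove a merging lemma: at such a spot two of the chords produced above can be amalgamated into one, equivalently one region may be omitted when chords are assigned, lowering the count to $c(K)+1$. The role of primeness is to make this local saving globally legitimate: it rules out the diagram decomposing so that the saved region lies in a summand where the amalgamation would break the knot type, so the merged presentation still represents $K$ with one fewer chord. Establishing exactly which local configuration licenses the merge, and checking that after the merge the height order still exists and the curve still closes up to $K$, is the main obstacle; this merging lemma is precisely where the improvement over the classical $c(K)+2$ bound is won.

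Finally I would collect the edge cases, namely connectedness and reducedness of the chosen diagram, the underpass rule (condition (2)) for the constructed chords, and the identification of the closed-up curve with $K$, after which the two bounds $a(K)\le c(K)+2$ and, in the non-alternating prime case, $a(K)\le c(K)+1$ follow.
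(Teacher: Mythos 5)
First, a point of orientation: the paper you are working from does not prove this statement at all. It is quoted directly from Bae and Park --- the text above the theorem says that Corollary 4 and Theorem 9 of their paper \cite{BP} ``provide the following'' --- and the authors use it purely as a black box to convert their own Theorem \ref{thm:2} into Theorem \ref{thm:main}. So your attempt has to be measured against Bae and Park's original argument, which is a substantial paper in its own right, not against anything in this source.

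Measured that way, your proposal identifies the correct numerology but has genuine gaps at exactly the two places where the content of the theorem lies. The Euler count is right: a connected minimal diagram has $V=c(K)$, $E=2c(K)$, hence $F=c(K)+2$ regions, and this is indeed where the ``$+2$'' comes from. But the claim that the edges of the diagram ``can be combed and merged into maximal segments, one associated with each region,'' producing an arc presentation with at most $c(K)+2$ chords, is asserted, not proved --- and it is the entire first inequality. The difficulty is not local: by condition (2) of the definition, the chords must carry over/under data at \emph{every} crossing that is simultaneously compatible with one linear height order $l_1,\dots,l_n$, so the induced ``over'' relation on the proposed chords must be acyclic, which is a global constraint on the whole diagram. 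You flag this yourself (``verifying that this merging \ldots respects the height condition''), but verifying it is precisely what Bae and Park's construction accomplishes, and your sketch gives no mechanism for it. Likewise, for the prime non-alternating case you name a ``merging lemma'' and then state that establishing it ``is the main obstacle''; that lemma is the content of Theorem 9 of \cite{BP}, and a strand passing over two consecutive crossings does not by itself yield the saving --- one must show the chord count can be lowered while the presentation still closes up to a single curve of the right knot type and still admits a consistent height order. As it stands, your text is a plausible research plan that rediscovers the right framework, but both inequalities are deferred to unproven claims, so it is not a proof.
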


Therefore, if we prove Theorem \ref{thm:2}, then the proof of our
main theorem is completed.
\begin{theorem} \label{thm:2}
Let $K$ be any nontrivial knot. Then $s(K) \leq \frac{3}{2}
(a(K)-1)$.
\end{theorem}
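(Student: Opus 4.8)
The plan is to prove Theorem~\ref{thm:2} constructively: starting from a minimal arc presentation of $K$ with $a=a(K)$ arcs, I will build an explicit stick knot of the same knot type using at most $\frac{3}{2}(a-1)$ sticks. Recall the open-book (spoke) picture of an arc presentation: the $a$ arcs meet a common axis (the binding) in $a$ \emph{binding points}, each arc lies in its own half-plane (page) and joins two binding points, and each binding point is an endpoint of exactly two arcs. Thus the knot is a single closed curve that threads the $a$ binding points in some cyclic order, running out into a distinct page between consecutive visits; the relative heights of the binding points, together with the cyclic order of the pages, encode all the over/under information coming from rule (2).

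First I would record the trivial realization: replacing each arc by two sticks, one from each of its binding points out to an apex placed in the interior of its page, yields a stick knot with $2a$ sticks. Since $a(K)\le c(K)+2$ by Theorem~\ref{thm:BP}, this already recovers a bound of Negami type, so all the content lies in doing better. The key saving is local to the binding points: the two sticks that meet at a binding point $b$ (the incoming stick of one arc and the outgoing stick of the next) can be amalgamated into a single straight stick through $b$ whenever the two pages meeting at $b$ can be made to lie in a common plane on opposite sides of the axis, for then the two apexes and $b$ may be chosen collinear. Each such amalgamation deletes one stick. Since $\frac{3}{2}(a-1)=2a-\frac{a+3}{2}$, it therefore suffices to perform the amalgamation at all but $\frac{a-3}{2}$ of the $a$ binding points.

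The heart of the argument is the combinatorial-geometric step of showing that so many simultaneous amalgamations can be arranged. The freedom available is the choice of heights of the binding points along the axis, the angular placement of the $a$ pages, and the position of each apex inside its page. I would pair the pages into full planes and route the construction so that, at a binding point, the two incident arcs lie in paired (opposite) half-planes; the straightening at $b$ is then geometrically legitimate. Organising the binding points into such compatible pairs amounts to a matching/parity argument on the cyclic sequence of pages, and the two extremal binding points (top and bottom of the axis) must be treated separately, which is exactly where the improvement from $\frac{3a}{2}$ to $\frac{3}{2}(a-1)$ is extracted.

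I expect the main obstacle to be precisely this counting together with the verification that the amalgamated configuration is still an embedding of the correct knot type: straightening a corner at $b$ perturbs the planar shadow, so I must check that no spurious crossing is created and that every prescribed over/under is realised by the chosen heights. Concretely, the plan is (i) fix a convenient normal form for the arc presentation, (ii) specify explicit coordinates for binding points and apexes in which the opposite-page condition holds at no fewer than $\frac{a+3}{2}$ binding points, (iii) confirm by a direct check that the resulting polygon is embedded and isotopic to $K$, and (iv) count the sticks. Combining the resulting bound $s(K)\le\frac{3}{2}(a(K)-1)$ with Theorem~\ref{thm:BP} then yields both halves of Theorem~\ref{thm:main}.
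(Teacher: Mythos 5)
There is a genuine gap, and it is quantitative, not just a matter of missing details. Your only saving mechanism is amalgamation at a binding point $b$ into a single straight stick through $b$, which, as you note, forces the two pages meeting at $b$ to be opposite halves of one plane. Now record the combinatorics this imposes: the pages are the vertices of a cycle of length $a$ (the knot alternately visits binding points and pages), and the binding points are exactly the edges of this cycle. A choice of ``pairing pages into full planes'' is a matching on the vertex set, and a binding point admits your amalgamation precisely when its two endpoint pages are matched to each other; two such ``good'' edges can never share a vertex, so the good binding points form a matching in the cycle and number at most $\lfloor a/2 \rfloor$. Hence your method saves at most $\lfloor a/2\rfloor$ sticks and yields only $s(K)\le 2a-\lfloor a/2\rfloor=\lceil 3a/2\rceil$, which exceeds the claimed bound $\frac{3}{2}(a-1)$ by exactly two sticks. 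Your stated need, amalgamation at $\frac{a+3}{2}$ binding points, is therefore unattainable within your own framework, and the one idea you offer to bridge the deficit --- treating the two extremal binding points ``separately'' --- is not a construction. Even on a charitable reading (cutting the corner at the topmost and bottommost binding points after pushing the relevant apexes beyond the extremes), those two binding points must then be excluded from the matching, and when $a$ is even this can cost a matched pair and leave you one stick short; worse, if the two extremal binding points are endpoints of a common arc, that arc's apex would have to lie both above the top and below the bottom of the axis.

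The paper escapes this ceiling because its reduction move is a corner cut, not a straightening through the corner, and so needs no coplanarity or collinearity at all. It realizes the presentation in a cylinder with one straight stick $h_i$ per arc at height $i$ plus vertical sticks on the boundary ($2n$ sticks), classifies the $h_i$ as type-I, II, III according to whether both neighbours lie above, one above and one below, or both below, and then re-chooses the heights inductively so that every type-II and type-III stick spans, together with one adjacent vertical stick, a \emph{reducible triangle} whose interior misses the rest of the knot; replacing two sides of such a triangle by the third saves one stick at every type-II and type-III arc simultaneously (Figure \ref{fig3}(b)), a separate move at the top arc saves two more (Figure \ref{fig3}(c)), and a parity argument (using $\beta_1=\beta_3$, plus a modification of the presentation in the case $n$ even, $\beta_2=0$) gives $\beta_1\le\frac{n-1}{2}$. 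The total saving is $\beta_2+\beta_3+1=n-\beta_1+1\ge\frac{n+3}{2}$, which is exactly the quantity your matching bound $\lfloor a/2\rfloor$ can never reach. The key idea you are missing is this freedom to reorder heights so that corner-cutting is admissible at \emph{all} type-II and type-III positions, not merely at binding points whose pages can be made coplanar.
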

\begin{proof}
Let $K$ be a nontrivial knot with $a(K)=n$ and $D$ be an its arc
presentation with $n$ chords $l_1, \ldots, l_n$. $\overline{D}$
denotes the projection of $D$. Let $\pi:\mathbb{R}^3 \rightarrow
\mathbb{R}^2$ be the map defined by $\pi(x,y,z)=(x,y)$. From $D$
we construct a stick knot $K_1$ in the cylinder $B \times [1,n]$
so that $\pi(K_1)=\overline{D}$. For each integer $i\in [1,n]$,
put a line segment $h_i$ into $B\times\{i\}$ so that
$\pi(h_i)=l_i$. If $l_i \cap l_j \cap
\partial B = \{p\}$, then connect $\pi^{-1}(p)\cap h_i$ to
$\pi^{-1}(p)\cap h_j$ by a vertical line segment $v_{ij}$ so that
$\pi(v_{ij})=p$. Note that we do not distinguish $v_{ij}$ from
$v_{ji}$. By adding all such vertical sticks, we obtain a stick
knot $K_1$ with $2n$ sticks which is equivalent to $K$. Figure
\ref{fig2} shows an example of a stick knot constructed from an
arc presentation  of the right-handed trefoil.

A horizontal stick $h_i$ is said be {\em type-I} ({\em resp.
type-III}), if the indices of the two chords adjacent to $l_i$ in
$D$ are greater ({\em resp.} less) than $i$. If neither type-I nor
type-III, then $h_i$ is {\em type-II}. Notice that $h_1$ and $h_n$
should be type-I and type-III, respectively. If $h_{n-1}$ is
type-II, then we can modify $K_1$ as illustrated in Figure
\ref{fig3}(a), so that the number of horizontal sticks is reduced
by one, which is contradictory to the minimality of the number of
chords. Since $K_1$ is a nontrivial knot, $h_{n-1}$ can not be
type-I. Hence $h_{n-1}$ should be type-III and similarly $h_2$
should be type-I.

\begin{figure}
\epsfbox{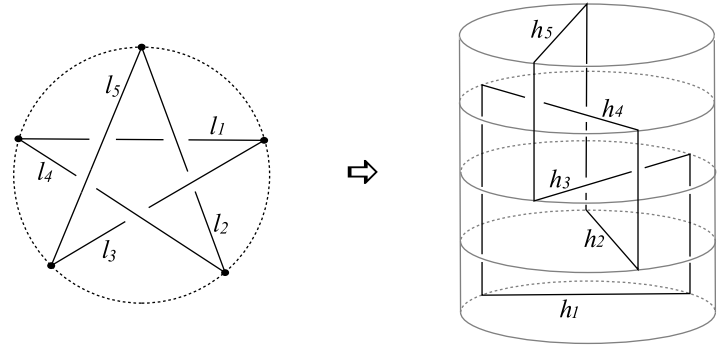} \caption{A stick knot in cylinder constructed
from an arc presentation} \label{fig2}
\end{figure}

From $K_1$ we construct another stick knot $K_2$ in which the
$z$-coordinate of each $h_i$ may be changed into some integer
$z_i$, while its $xy$-coordinates are preserved. Concretely, if we
denote the $i$-th horizontal stick of $K_2$ by $h^{\prime}_i$,
then $\pi(h_i)=\pi(h^{\prime}_i)$ and $h^{\prime}_i \subset
B\times \{z_i\}$ in $B\times [0,\infty)$. The height $z_i$ will be
determined in inductive way. Firstly, set $z_1=1$ and $z_2=2$. For
$3 \leq i \leq n$, if $h_i$ is type-I, then $z_i$ is set to be
$z_{i-1}+1$. If $h_i$ is type-II, there is a vertical stick
$v_{ij}$ with with $j<i$ which is adjacent to $h_i$. Then put
$h^{\prime}_i$ into $B\times \{z_i\}$ for some large enough $z_i$
and connect $h^{\prime}_i$ to $h^{\prime}_j$ via the vertical
stick $v^{\prime}_{ij}$ between $B\times \{z_i\}$ and $B\times
\{z_j\}$, so that the interior of the triangle determined by
$h^{\prime}_i \cup v^{\prime}_{ij}$ has no intersection with any
other horizontal stick $h^{\prime}_k$, $k < i$. Here, such a
triangle will be called a {\em reducible triangle} of
$h^{\prime}_i$. If $h_i$ is type-III, that is, $h_i$ is adjacent
to some $v_{ij}$ and $v_{ik}$ with $i>j>k$, then similarly the
height of $h_i^{\prime}$ is determined so that the triangle whose
boundary contains $h^{\prime}_i \cup v^{\prime}_{ij}$ is
reducible.

Now we modify $K_2$ in purpose to decrease the number of sticks.
For each $i$ from $3$ to $n-1$, if $h^{\prime}_i$ is type-II or
III, replace $h^{\prime}_i \cup v^{\prime}_{ij}$ with the other
edge of the reducible triangle (See Figure \ref{fig3}(b)). Since
the interior of the triangle has no intersection with any other
part of the knot, such replacement preserves the knot type. And
the number of sticks becomes reduced by one, after each
modification. For $h^{\prime}_n$, we modify the knot in another
way. Let $v^{\prime}_{ni}$ and $v^{\prime}_{nj}$ be the sticks
adjacent to $h^{\prime}_n$. The other stick adjacent to
$v^{\prime}_{ni}$ ({\em resp.} $v^{\prime}_{nj}$) is denoted by
$e_i$ ({\em resp.} $e_j$). Extend $e_i$ and $e_j$ toward the end
points $e_i\cap v^{\prime}_{ni}$ and $e_j\cap v^{\prime}_{nj}$,
respectively, long enough so that the two extended line segments
are connected by a line segment outside of $B\times[1,z_n]$.
Replace $e_i \cup v^{\prime}_{ni} \cup h^{\prime}_{n} \cup
v^{\prime}_{nj} \cup e_j$ with these three line segments(See
Figure \ref{fig3}(c) for example). Then the knot type is
preserved, but the number of sticks is reduced by two. Let $K_3$
be the resulting stick knot.
\begin{figure}
\epsfbox{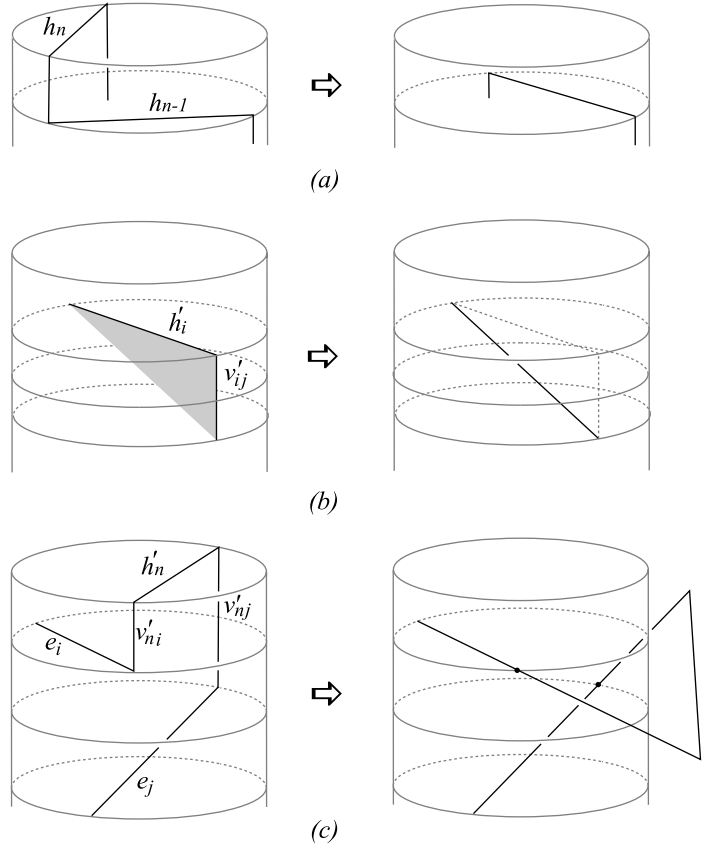} \caption{(a) Reduction when $h_{n-1}$ is
type-II, (b) Reduction along a reducible triangle and (c)
Reduction near $h^{\prime}_n$} \label{fig3}
\end{figure}

Let $\beta_1(K_1)$, $\beta_2(K_1)$ and $\beta_3(K_1)$ be the
numbers of type-I, type-II and type-III horizontal sticks of
$K_1$, respectively. Note that $\beta_1(K_1)=\beta_3(K_1)$. Since
$n=\beta_1(K_1)+\beta_2(K_1)+\beta_3(K_1)$, the number of sticks
of $K_3$ is equal to
$$2n-\beta_2(K_1)-(\beta_3(K_1) -1) -2 = n+ \beta_1(K_1) -1 \;\;\;.$$
Therefore,
$$s(K) \leq n+\beta_1(K_1) -1 \;\;\;.$$
Now we consider an upper bound of $\beta_1(K_1)$. If $n$ is odd,
then $\beta_1(K_1) \leq (n-1)/2$. If $n$ is even, then
$\beta_1(K_1) \leq n/2$ in which the equality holds only when
$\beta_2(K_1)=0$. In that case, let $v_{i1}$ and $v_{j1}$ be the
horizontal sticks adjacent to $h_1$ in $K_1$. And replace $v_{i1}
\cup h_1 \cup v_{j1}$ with $v_{i(n+1)}\cup h_{n+1} \cup
v_{j(n+1)}$, where $h_{n+1}$ is the horizontal line segment in
$B\times \{n+1\}$ satisfying $\pi(h_1)=\pi(h_{n+1})$. Then the
resulting stick knot $K_1^{\prime}$ in $B\times[2,n+1]$ is
equivalent to $K_1$. Because $\beta_2(K_1)=0$, we have
$$\beta_1(K_1^{\prime}) = \beta_1(K_1)-1 \leq
\frac{n}{2}-1 < \frac{n-1}{2} \;\;\;.$$

To summarize, for a nontrivial knot $K$ with $a(K)=n$, there
exists an equivalent stick knot $K^{\prime}$ with 2$n$ sticks in
the cylinder satisfying
$$\beta_1(K^{\prime}) \leq \frac{n-1}{2}$$
and therefore
$$s(K) \leq n+\beta_1(K^{\prime})-1 \leq a(K)+\frac{a(K)-1}{2}-1 =
\frac{3}{2}(a(K)-1) \;\;\;.$$
\end{proof}
\noindent{\bf Acknowledgement.} The second author(corresponding
author) would like to give thanks to Professor Sang Jin Lee for
his administrative support to the research project related with
this paper.

\end{document}